\newtheorem{thm}{Theorem}[section]
\newtheorem{mthm}[thm]{Main Theorem}
\newtheorem{lem}[thm]{Lemma}
\newtheorem{prop}[thm]{Proposition}
\theoremstyle{definition}
\newtheorem{defn}[thm]{Definition}
\numberwithin{equation}{section}
\newcommand{\im}{\textmd{im}}
\begin{document}

\title{On the Morse--Novikov Cohomology of blowing up complex manifolds}

\author{Yongpan Zou}
\address{School of Mathematics and Statistics, Wuhan University, Wuhan 430072, P. R. China}
\email{yongpan\_zou@whu.edu.cn}%

\keywords{Morse--Novikov cohomology, sheaf theory, relative cohomology, blow-up formula}

\date{\today}


\begin{abstract}
Inspired by the recent works of S. Rao--S. Yang--X.-D. Yang and L. Meng on the blow-up formulae for de Rham and Morse--Novikov cohomology groups,
we give a new simple proof of the blow-up formula for Morse--Novikov cohomology by introducing the relative Morse--Novikov cohomology group via sheaf  cohomology theory and presenting the explicit isomorphism therein.
\end{abstract}

\maketitle
\setcounter{tocdepth}{1}


\section{Introduction}
Let $X$ be a  smooth manifold and $\mathcal{A}^p(X)$ the space of smooth $p$-forms on $X$. Over $X$, there is an exterior differential operator $d$ and the associated complex:
\begin{displaymath}
\xymatrix{
\cdots\ar[r] &\mathcal{A}^{p-1}(X)\ar[r]^{d}&\mathcal{A}^{p}(X)\ar[r]^{d}&\mathcal{A}^{p+1}(X)\ar[r]&\cdots
,}
\end{displaymath}
whose cohomology $H^p(X) := H^p(\mathcal{A}^\bullet(X),d)$ is called \emph{$p$-th de Rham cohomology group}.
Now we choose a closed $1$-form $\theta$ on $X$. For $\alpha\in\mathcal{A}^p(X)$, define $$d_\theta:\mathcal{A}^p(X)\rightarrow\mathcal{A}^{p+1}(X)$$ as $$d_\theta\alpha=d\alpha+\theta\wedge\alpha.$$
Obviously, $d_\theta\circ d_\theta=0$ and we have a complex:
\begin{displaymath}
\xymatrix{
\cdots\ar[r] &\mathcal{A}^{p-1}(X)\ar[r]^{d_\theta}&\mathcal{A}^{p}(X)\ar[r]^{d_\theta}&\mathcal{A}^{p+1}(X)\ar[r]&\cdots
,}
\end{displaymath}
whose cohomology $H^p_\theta(X) := H^p(\mathcal{A}^\bullet(X),d_\theta)$ is called  \emph{$p$-th Morse--Novikov cohomology group}.

This cohomology was originally defined by Lichnerowicz \cite{GL,L} and Novikov \cite{N} in the context of Poisson geometry and Hamiltonian mechanics, respectively. It is commonly used to study the locally conformally K\"{a}hlerian and locally conformally symplectic structures. It's not a topological invariant but a conformal invariant. One significant application of locally conformally K\"{a}hlerian structure in complex geometry is the classification of compact non-K\"{a}hler complex surfaces. Here we refer to \cite{Be} and \cite{ovv} for more references therein.

For a complex manifold $X$ and a submanifold $Z \subseteq X$, we consider a new complex manifold, the blow-up $\widetilde{X}$ of $X$ along $Z$. It's interesting to find the relations between various cohomologies of $\widetilde{X}$ with those of $X$ and $Z$.
The blow-up formula for de Rham cohomology has been presented in \cite[Theorem 7.31]{V}. In \cite{ryy}, S. Rao--S. Yang--X.-D. Yang give a new proof of the blow-up formula for de Rham cohomology by use of the relative de Rham cohomology. In \cite{YZ15}, X.-D. Yang--G. Zhao prove a blow-up formula of Morse--Novikov cohomology for compact locally conformal K\"ahler manifolds with some additional restriction on the closed $1$-forms $\theta$. In \cite{meng2}, L. Meng systematically studies the behavior of Morse--Novikov cohomology under blow-up along a connected complex submanifold $Z$. Especially, he defines the weight $\theta$-sheaf $\underline{\mathbb{R}}_{X,\theta}$ and reinterprets Morse--Novikov cohomology via sheaf theory. Meng also establishes a theorem of Leray--Hirsch type for Morse--Novikov cohomology to prove the blow-up formulae with explicit isomorphisms.

In this paper, we combine the relative cohomological method in \cite{ryy,ryy2,ryyy} with the sheaf theory of \cite{meng2}, to give a new simple proof of the blow-up formula for Morse--Novikov cohomology with an explicit isomorphism.
\begin{mthm} \label{main-thm}
Let $X$ be a complex manifold with $\emph{dim}_{\mathbb{C}}\,X=n$, $Z\subseteq X$ a closed complex submanifold of complex codimension $r\geq2$ and $i^{\ast}$ the pullback of the inclusion $i:Z\hookrightarrow X$. Suppose that $\pi:\tilde{X}\rightarrow X$ is the blow-up of $X$ along $Z$. Denote by $E:=\pi^{-1}(Z)\cong \mathbb{P}(\mathcal{N}_{Z/X})$ the exceptional divisor of the blow-up. Set $\tilde{i}^\ast$ as the pullback of the inclusion $\tilde{i}: E \hookrightarrow \tilde{X}$. Then for any $0\leq k\leq 2n$, the map
$$
 \phi=\pi_{\ast}+\sum_{j=1}^{r-1}\Pi_{j}\circ \tilde{i}^{\ast}
$$
gives isomorphism
$$\label{1.1}
  H^{k}_{\pi^\ast\theta}(\tilde{X}) \cong  H^{k}_{\theta}(X)\oplus \Big(\bigoplus_{j=1}^{r-1} H^{k-2j}_{i^\ast\theta}(Z) \Big),
$$
where the definition of $ \phi$ is given in \eqref{phi}.
\end{mthm}
This blow-up formula was first proved by Meng in \cite[Main Theorem 1.3]{meng2}. Meng's explicit isomorphism is
 \begin{equation}\label{psi}
 \psi:= \pi^*+\sum_{j=1}^{r-1}(\tilde{i})_*\circ (h^j\cup)\circ (\pi|_E)^*,
 \end{equation}
where $\pi|_E: E:=\pi^{-1}(Z)\rightarrow Z$ is the projection of the projectivization $E\cong\mathbb{P}(N_{Z/X})$ of the normal bundle $N_{Z/X}$, and $h:=c_1(\mathcal{O}_E(-1))\in H_{\textrm{dR}}^2(E)$ is the first Chern class of the universal line bundle $\mathcal{O}_E(-1)$. This morphism maps the cohomology group $ H^{k}_{\theta}(X)\oplus \Big(\bigoplus_{j=1}^{r-1} H^{k-2j}_{i^\ast\theta}(Z) \Big)$ to $H^{k}_{\pi^\ast\theta}(\tilde{X})$. The explicit morphism $\phi$ constructed here is inspired by \cite{ryy2}, and owns an inverse direction to Meng's one. Actually in \cite{meng3}, Meng points out that these two morphism inverse to each other.

Moreover, the relative de Rham cohomology is isomorphic to the de Rham cohomology with compact support, while it does not hold true for the Morse--Novikov case anymore (see \cite{ryy,z19}). So we are not able to apply the relative cohomological method via the compactly supported cohomology  directly as \cite{ryy,ryy2}.

\subsection*{Acknowledgement}
The author is indebted to Professor Sheng Rao for his constant support and for many useful discussions. We also would like to thank Professor Lingxu Meng for many useful discussions, especially the sheaf theory in subsection \ref{sheaf 2.3}, and Professor X.-D. Yang for the information on the background of Morse--Novikov cohomology.

\section{Preliminaries}\label{pre}

\subsection{The sequence associated to a closed submanifold}

Assume that $X$ is a smooth manifold with dimension $n$ and let $Z$ be a $k$-dimensional closed submanifold of $X$.
In this paper we focus on the space of differential forms
$$
\mathcal{A}^{\bullet}(X,Z)=\{\alpha\in\mathcal{A}^{\bullet}(X): i^{\ast}\alpha=0\}.
$$
For any closed $1$-form $\theta$ on $X$, $i^{\ast}\theta$ is a closed $1$-form on $Z$. It's easy to see that for any $\alpha\in\mathcal{A}^{\bullet}(X) $,
$$ i^\ast d_\theta\alpha = i^\ast(d\alpha+\theta\wedge\alpha) = d(i^\ast\alpha)+i^\ast\theta\wedge i^\ast\alpha = d_{i^\ast\theta}i^\ast\alpha. $$
Therefore,
 $$ \alpha\in\mathcal{A}^{\bullet}(X,Z) \Rightarrow i^\ast\alpha=0 \Rightarrow i^\ast d_\theta\alpha=0 \Rightarrow d_\theta\alpha\in \mathcal{A}^{\bullet}(X,Z).$$
So $\mathcal{A}^{\bullet}(X,Z)$ is closed under the action of the exterior differential operator $d_{\theta}$ and we get a sub-complex of the Morse--Novikov complex $\{\mathcal{A}^{\bullet}(X),d_\theta\}$, called the \emph{relative Morse--Novikov complex} with respect to $Z$:
$$
\xymatrix{
0 \ar[r]^{} & \mathcal{A}^{0}(X,Z) \ar[r]^{d_\theta} & \mathcal{A}^{1}(X,Z)  \ar[r]^{d_\theta} & \mathcal{A}^{2}(X,Z) \ar[r]^{\quad d_\theta} & \cdots.}
$$
The associated cohomology, denoted by $H_{\theta}^{\bullet}(X,Z)$, is called the \emph{relative Morse--Novikov cohomology} of the pair $(X,Z)$.

\begin{lem}\label{lemma 2.1}
The pullback $i^{\ast}:\mathcal{A}^{\ast}({X})\rightarrow \mathcal{A}^{\ast}({Z})$ is surjective.
\end{lem}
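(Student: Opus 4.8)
The plan is to prove surjectivity of $i^\ast:\mathcal{A}^\ast(X)\to\mathcal{A}^\ast(Z)$ by a standard tubular-neighbourhood-plus-partition-of-unity argument. First I would invoke the tubular neighbourhood theorem: there is an open neighbourhood $U$ of $Z$ in $X$ together with a smooth retraction $\rho:U\to Z$ (the bundle projection of the normal bundle under the diffeomorphism $U\cong \mathcal{N}_{Z/X}$) such that $\rho\circ i=\mathrm{id}_Z$. Given any $\beta\in\mathcal{A}^\ast(Z)$, the pullback $\rho^\ast\beta\in\mathcal{A}^\ast(U)$ then satisfies $i^\ast(\rho^\ast\beta)=(\rho\circ i)^\ast\beta=\beta$, so $\beta$ extends over the neighbourhood $U$; the only remaining issue is to extend it globally to $X$ as a smooth form.

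For the globalisation step I would choose a smooth bump function: since $Z$ is closed in $X$ and $U$ is open with $Z\subseteq U$, pick $\chi\in\mathcal{A}^0(X)$ with $\mathrm{supp}\,\chi\subseteq U$ and $\chi\equiv 1$ on a smaller open neighbourhood $V$ of $Z$ (such $\chi$ exists by normality/paracompactness of manifolds, e.g.\ via a partition of unity subordinate to the cover $\{U, X\setminus Z\}$). Then set $\alpha:=\chi\cdot\rho^\ast\beta$, interpreted as $0$ on $X\setminus U$; this is a well-defined global smooth form on $X$ because on the overlap $U\setminus\mathrm{supp}\,\chi$ it vanishes. Since $i(Z)\subseteq V$ where $\chi\equiv 1$, we get $i^\ast\alpha=(i^\ast\chi)\,i^\ast(\rho^\ast\beta)=1\cdot\beta=\beta$, which proves surjectivity.

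There is really no hard part here; the statement is a soft differential-topology fact. The only point that deserves a word of care is the passage from the local extension $\rho^\ast\beta$ on $U$ to a global form, i.e.\ making sure the cut-off function $\chi$ is identically $1$ near $Z$ so that multiplying by it does not disturb the pullback to $Z$, and that $\chi\cdot\rho^\ast\beta$ genuinely extends by zero to a \emph{smooth} form on all of $X$. Both are handled by the standard bump-function construction. (One may note in passing that the same argument shows $i^\ast$ is surjective on $d_\theta$-closed forms is \emph{not} claimed here—only surjectivity at the level of all smooth forms is needed, which is what gives the short exact sequence $0\to\mathcal{A}^\bullet(X,Z)\to\mathcal{A}^\bullet(X)\xrightarrow{i^\ast}\mathcal{A}^\bullet(Z)\to 0$ of complexes used in the sequel.)
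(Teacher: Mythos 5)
Your proposal is correct and follows essentially the same route as the paper: a tubular neighbourhood with retraction, pullback of the form, multiplication by a cut-off function equal to $1$ near $Z$, and extension by zero. No discrepancies worth noting.
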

\begin{proof}
By the classical tubular neighborhood theorem \cite[Theorem 6.24]{lee}, we have an open tubular neighborhood $V$ of $Z$ with a smooth retraction map $\gamma: V\rightarrow Z$ such that $\gamma|_{Z}$ is the identity map of $Z$ as in \cite[Proposition 6.25]{lee}. As usual, we let $i: Z\hookrightarrow V$ be the inclusion. Let $ \varphi: X\rightarrow[0,1]$ be a smooth cut-off function such that $Supp(\varphi)\subseteq V $ and such that $ \varphi $ is constantly equal to $1$ on some open set $ W \subseteq V $ with $ Z \subseteq W $. For any $q$-forms $ \alpha\in \mathcal{A}^{q}(Z)$, we define $ \widetilde{\alpha}:= \gamma^{\ast}(\alpha)\in \mathcal{A}^{q}(V)$ and $\tau:= \varphi\widetilde{\alpha}$. It is easy to see that $\tau$ extends trivially over the remaining part of $X$, that is $\tau \in \mathcal{A}^{q}(X)$. Therefore we can see $ i^{\ast}\tau=i^{\ast}\varphi\gamma^{\ast}(\alpha)=\alpha.$
\end{proof}

\subsection{Blow-up}
Let $X$ be a complex manifold of complex dimension $n$. Suppose that $i: Z \hookrightarrow X$ is a closed complex submanifold of complex codimension $r\geq2$. Without loss of generality, we assume that $Z$ is connected; otherwise, we can carry out the blow-up operation along each connected component of $Z$ step by step. Recall that the \emph{normal bundle} $T_{X|Z}/T_Z$ of $Z$ in $X$, denoted by $\mathcal{N}_{Z/X}$, is a holomorphic vector bundle of rank $r$. Here $T_M$ denotes the holomorphic tangent bundle of the complex manifold $M$ and $T_{M|N}$ is its restriction to the submanifold $N$ of $M$.      The \emph{blow-up $\tilde{X}$ of $X$ with center $Z$} is a projective morphism  $\pi: \tilde{X}\to X$ such that
$$
\pi: \tilde{X}-E \to X-Z
$$
is a biholomorphism.
Here
\begin{equation} \label{excep}
E:=\pi^{-1}(Z)\cong \mathbb{P}(\mathcal{N}_{Z/X})
\end{equation}
is the \emph{exceptional divisor} of the blow-up.
Then one has the following blow-up diagram
\begin{equation}\label{blow-up}
\xymatrix{
E \ar[d]_{\pi_{E}} \ar@{^{(}->}[r]^{\tilde{i}} & \tilde{X}\ar[d]^{\pi}\\
 Z \ar@{^{(}->}[r]^{i} & X.
}
\end{equation}

\subsection{Morse-Novikov cohomology via sheaf theory} \label{sheaf 2.3}
The notations and definitions follow \cite{meng2}. Suppose that $ \mathcal{A}_X^k $ is the sheaf of germs of smooth $k$-forms, and we call the kernel of $ d_\theta : \mathcal{A}_X^0 \rightarrow \mathcal{A}_X^1 $ a \emph{weight $\theta$-sheaf}, denoted by $ \underline{\mathbb{R}}_{X,\theta} $. The weight $\theta$-sheaf $\underline{\mathbb{R}}_{X,\theta} $ is a locally constant sheaf of $ \mathbb{R}$-modules of rank $1$. We have a resolution of soft sheaves of $ \underline{\mathbb{R}}_{X,\theta}, $
\begin{center}
$ \xymatrix@C=0.5cm{
  0 \ar[rr]^{} && \underline{\mathbb{R}}_{X,\theta} \ar[rr]^{i} && \mathcal{A}_X^0 \ar[rr]^{d_\theta} && \mathcal{A}_X^1 \ar[rr]^{d_\theta} && \cdot\cdot\cdot \ar[rr]^{d_\theta} && \mathcal{A}_X^n \ar[rr]^{} && 0,}
$
\end{center}
where $ i $ is the inclusion.
Let $ \mathcal{A}_X^\bullet \rightarrow \mathcal{I}^\bullet $ be an injective resolution of complex $ (\mathcal{A}_X^\bullet,d_\theta) $ of  sheaves in the category of sheaves on $ X $ and  it induces a morphism
  $$ H_\theta^\ast(X) = H^\ast(\mathcal{A}^\bullet(X),d_\theta) \rightarrow H^\ast(\Gamma(X,\mathcal{I}^\bullet)) = H^\ast(X,\mathbb{R}_{X,\theta}), $$
denoted by $ \rho_{X,\theta} $. Since $ \mathcal{A}_X^\bullet $ is a resolution of soft sheaves of $ \underline{\mathbb{R}}_{X,\theta} $, $ \rho_{X,\theta} $ is an isomorphism.

The next lemma plays an important role in our paper.
\begin{lem}[{\cite[Lemma 2.2]{meng2}}] \label{lemma 2.2}
Let $X$ be a connected smooth manifold and $\theta$ a closed $1$-form on $X$. Suppose that $f:Y\rightarrow X$ is a smooth map between smooth manifolds.  Then the inverse image sheaf $f^{-1}\underline{\mathbb{R}}_{X,\theta}\cong\underline{\mathbb{R}}_{Y,f^*\theta}$.
\end{lem}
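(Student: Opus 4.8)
The plan is to produce an explicit sheaf morphism $\Phi:f^{-1}\underline{\mathbb{R}}_{X,\theta}\to\underline{\mathbb{R}}_{Y,f^*\theta}$ induced by pullback of functions, and then to verify that it is an isomorphism by a stalk-wise computation. The starting point is the commutation identity that already underlies the construction of the relative complex in Section~\ref{pre}: for any smooth map $f:Y\to X$ and any local smooth function $g$ on $X$, one has
$$ d_{f^*\theta}(f^*g) = d(f^*g) + f^*\theta\cdot f^*g = f^*(dg) + f^*(\theta\, g) = f^*(d_\theta g). $$
In particular $f^*$ carries the kernel of $d_\theta:\mathcal{A}_X^0\to\mathcal{A}_X^1$ into the kernel of $d_{f^*\theta}:\mathcal{A}_Y^0\to\mathcal{A}_Y^1$, so pullback of functions respects the weight sheaves.

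Using this, I would first build a morphism $\underline{\mathbb{R}}_{X,\theta}\to f_*\underline{\mathbb{R}}_{Y,f^*\theta}$: for an open set $U\subseteq X$ the assignment $g\mapsto f^*g=g\circ f$ sends a section of $\underline{\mathbb{R}}_{X,\theta}$ over $U$ (a function with $d_\theta g=0$) to a section of $\underline{\mathbb{R}}_{Y,f^*\theta}$ over $f^{-1}(U)$ by the identity above, and this is visibly compatible with restriction. Applying the adjunction between the inverse image functor $f^{-1}$ and the direct image functor $f_*$ then yields the desired morphism $\Phi:f^{-1}\underline{\mathbb{R}}_{X,\theta}\to\underline{\mathbb{R}}_{Y,f^*\theta}$. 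On stalks, $\Phi$ is exactly the map induced by pulling back germs: at $y\in Y$ with $x=f(y)$, the canonical identification $(f^{-1}\underline{\mathbb{R}}_{X,\theta})_y\cong(\underline{\mathbb{R}}_{X,\theta})_x$ turns $\Phi_y$ into the map $[g]\mapsto[f^*g]$ on germs.

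It then remains to check that $\Phi_y$ is an isomorphism, and here I would use the explicit local model of the weight sheaf. Since $\theta$ is closed, the Poincar\'e lemma gives, on a connected (say ball-shaped) neighbourhood $U$ of $x$, a primitive $\theta=d\varphi$; substituting $u=ge^{\varphi}$ turns the equation $d_\theta g=0$ into $du=0$, so on $U$ the solutions are precisely $g=ce^{-\varphi}$ with $c\in\mathbb{R}$. Thus $(\underline{\mathbb{R}}_{X,\theta})_x\cong\mathbb{R}$ with generator the germ of $e^{-\varphi}$. The same argument on the $Y$-side, using that $f^*\varphi$ is a primitive of $f^*\theta$ near $y$, shows $(\underline{\mathbb{R}}_{Y,f^*\theta})_y\cong\mathbb{R}$ with generator the germ of $e^{-f^*\varphi}=f^*(e^{-\varphi})$. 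Since $\Phi_y$ sends the germ of $e^{-\varphi}$ to the germ of $f^*(e^{-\varphi})$, it carries a generator to a generator and is therefore an $\mathbb{R}$-linear isomorphism. As $\Phi$ is an isomorphism on every stalk, it is an isomorphism of sheaves.

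The main obstacle I expect is purely bookkeeping: one must confirm that the adjunction morphism $\Phi$ really is computed on stalks as the naive germ-level pullback $[g]\mapsto[f^*g]$, so that the clean generator-to-generator statement applies, and one must keep careful track of the local primitives $\varphi$ and $f^*\varphi$ — in particular that the chosen neighbourhoods may be shrunk to a connected one on which Poincar\'e applies, so that each stalk is genuinely one-dimensional rather than merely locally constant of rank one. Beyond this the result follows formally, and indeed it is precisely the rank-one local constancy of $\underline{\mathbb{R}}_{X,\theta}$ noted just before the lemma that guarantees the two stalks have the same dimension.
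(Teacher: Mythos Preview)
Your argument is correct: constructing the morphism via the $(f^{-1},f_*)$-adjunction applied to pullback of functions, and then checking on stalks using the local primitive $\theta=d\varphi$ and the substitution $u=ge^{\varphi}$, is exactly the standard way to establish this isomorphism, and the details you give are accurate. Note, however, that the paper does not supply its own proof of this lemma at all --- it is simply quoted from \cite[Lemma 2.2]{meng2} --- so there is nothing in the present paper to compare your approach against; your write-up is a complete justification where the paper merely cites one.
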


Let $i: Z \rightarrow X$ be a closed submanifold and $\theta$ a closed $1$-form on $X$. We know that $i^{\ast}: \mathcal{A}_X^{p} \rightarrow i_{\ast}\mathcal{A}_Z^{p}$ is an epimorphism of sheaves.  Denote the kernel of $i^{\ast}$ by $ \mathcal{A}_{X,Z}^{p}. $
Thus one gets an exact sequence of sheaves
$$
\xymatrix{
0\ar[r] &\mathcal{A}_{X,Z}^{p}\ar[r]&\mathcal{A}^{p}_{X}\ar[r]^{i^{\ast}}&i_{\ast}\mathcal{A}_Z^{p}\ar[r]&0
.}
$$
Then there is a short exact sequence of complexes of sheaves
\begin{equation} \label{2.4}
\xymatrix{
0\ar[r] &\mathcal{A}_{X,Z}^{\bullet}\ar[r]&\mathcal{A}^{\bullet}_{X}\ar[r]^{i^{\ast}}&i_{\ast}\mathcal{A}_Z^{\bullet}\ar[r]&0
}.
\end{equation}
We now consider the complexes of sheaves:
$$
\xymatrix{
0 \ar[r]^{} & \underline{\mathbb{R}}_{X,\theta} \ar[r] & \mathcal{A}^{0}_{X}  \ar[r]^{d_\theta} & \mathcal{A}^{1}_{X} \ar[r]^{d_\theta} & \cdots,}
$$

$$
\xymatrix{
0 \ar[r]^{} & \underline{\mathbb{R}}_{Z,i^{\ast}\theta} \ar[r] & \mathcal{A}^{0}_{Z}  \ar[r]^{d_{i^{\ast}\theta}} & \mathcal{A}^{1}_{Z} \ar[r]^{d_{i^{\ast}\theta}} & \cdots,}
$$

$$
\xymatrix{
0 \ar[r]^{} & \underline{\mathbb{R}}_{X,Z,\theta} \ar[r] & \mathcal{A}^{0}_{X,Z}  \ar[r]^{d_\theta} & \mathcal{A}^{1}_{X,Z} \ar[r]^{d_\theta} & \cdots,}
$$
where
$$ \underline{\mathbb{R}}_{Z,i^{\ast}\theta} := \ker\ (d_{i^{\ast}\theta} : \mathcal{A}_Z^0 \rightarrow \mathcal{A}_Z^1),$$
$$\underline{\mathbb{R}}_{X,Z,\theta} := \ker\ (d_\theta : \mathcal{A}_{X,Z}^0 \rightarrow \mathcal{A}_{X,Z}^1) .$$

For the complex $\mathfrak{F}^{\bullet}$ of sheaves
 \begin{displaymath}
\xymatrix{
0\ar[r] &\mathfrak{F}^{p-1}\ar[r]&\mathfrak{F}^{p}\ar[r]^{}&\mathfrak{F}^{p+1}\ar[r]&\cdot\cdot\cdot
},
\end{displaymath}
we define the cohomology of the complex $\mathfrak{F}^{\bullet}$ as
$$\mathcal{H}^{p}(\mathfrak{F}^{\bullet}) := \frac{\ker\ (\mathfrak{F}^{p} \rightarrow \mathfrak{F}^{p+1})}{\im\ (\mathfrak{F}^{p-1} \rightarrow \mathfrak{F}^{p})}.
$$
Then \eqref{2.4} yields a long exact sequence of sheaves
$$
\xymatrix{
\cdots\ar[r]^{} & \mathcal{H}^{p-1}(i_{\ast}\mathcal{A}_Z^{\bullet}) \ar[r]^{} & \mathcal{H}^{p}(\mathcal{A}_{X,Z}^{\bullet}) \ar[r]^{} & \mathcal{H}^{p}(\mathcal{A}_{X}^{\bullet})\ar[r]^{} & \mathcal{H}^{p}(i_{\ast}\mathcal{A}_Z^{\bullet}) \ar[r]^{} & \cdots.}
$$
It's easy to see
\begin{equation*}
\mathcal{H}^{p}(\mathcal{A}_{X}^{\bullet}) = \frac{\ker\ (\mathcal{A}_{X}^{p} \rightarrow \mathcal{A}_X^{p+1})}{\im\ (\mathcal{A}_{X}^{p-1} \rightarrow \mathcal{A}_X^{p})} =
 \begin{cases}
\underline{\mathbb{R}}_{X,\theta}, &p=0, \\
 0, &p\neq 0;
 \end{cases}
\end{equation*}

\begin{equation*}
\mathcal{H}^{p}(i_{\ast}\mathcal{A}_Z^{\bullet}) = i_{\ast}\mathcal{H}^{p}(\mathcal{A}_Z^{\bullet}) =
\begin{cases}
   i_{\ast}\underline{\mathbb{R}}_{Z,i^{\ast}\theta}, & p=0, \\
 0, & p\neq 0.
\end{cases}
\end{equation*}
So the long exact sequence actually turns into
$$
\xymatrix{
0 \ar[r]^{} & \underline{\mathbb{R}}_{X,Z,\theta} \ar[r]& \underline{\mathbb{R}}_{X,\theta} \ar[r] &i_{\ast}\underline{\mathbb{R}}_{Z,i^{\ast}\theta}\\
\ar[r]& \mathcal{H}^{0}(\mathcal{A}_{X,Z}^{\bullet})  \ar[r]^{} &0 \ar[r] &0\\
 \ar[r]&\mathcal{H}^{1}(\mathcal{A}_{X,Z}^{\bullet}) \ar[r]^{} & 0 \ar[r]&0\\
 \ar[r]& \cdots& &\\
  \ar[r] &\mathcal{H}^{p}(\mathcal{A}_{X,Z}^{\bullet}) \ar[r] &0 \ar[r] &0\\
   \ar[r]& \cdots& &.}
$$
So $\mathcal{H}^{p}(\mathcal{A}_{X,Z}^{\bullet}) =0$, for  $p\geq 1$.
By Lemma \ref{lemma 2.2}, $\underline{\mathbb{R}}_{Z,i^{\ast}\theta} \cong i^\ast \underline{\mathbb{R}}_{X,\theta}$ and thus \eqref{2.8} implies that $ \underline{\mathbb{R}}_{X,\theta} \rightarrow i_{\ast}\underline{\mathbb{R}}_{Z,i^{\ast}\theta} \cong i_\ast i^\ast \underline{\mathbb{R}}_{X,\theta}$ is an epimorphism. Hence, $ \mathcal{H}^{0}(\mathcal{A}_{X,Z}^{\bullet}) = 0. $

In summary, we have the short exact sequence
\begin{equation}\label{2.5}
\xymatrix{
0 \ar[r]^{} & \underline{\mathbb{R}}_{X,Z,\theta} \ar[r]& \underline{\mathbb{R}}_{X,\theta} \ar[r] &i_{\ast}\underline{\mathbb{R}}_{Z,i^{\ast}\theta} \ar[r]^{} &0,}
\end{equation}
and the resolution of $\underline{\mathbb{R}}_{X,Z,\theta}$
$$
\xymatrix{
0 \ar[r]^{} & \underline{\mathbb{R}}_{X,Z,\theta} \ar[r] & \mathcal{A}^{0}_{X,Z}  \ar[r]^{d_\theta} & \mathcal{A}^{1}_{X,Z} \ar[r]^{d_\theta} & \cdots}
$$
by $\mathcal{H}^{p}(\mathcal{A}_{X,Z}^{\bullet}) =0$, for  $p\geq 0$. We know that $ \mathcal{A}^{p}_{X,Z} $ is a ${C}_X^{\infty}$-module where ${C}_X^{\infty}$ is the sheaf of germs of ${C}^{\infty}$ differentiable functions over $X$. Since ${C}_X^{\infty}$ is a fine sheaf, $\mathcal{A}^{p}_{X,Z}$ is a fine sheaf too. So one can calculate the cohomology of $ \underline{\mathbb{R}}_{X,Z,\theta} $ by
\begin{equation} \label{2.7}
 H^{p}(X,~\underline{\mathbb{R}}_{X,Z,\theta}) = H^{p}(\Gamma(X,~\mathcal{A}^{\bullet}_{X,Z}), d_{\theta}) = H_{\theta}^{\bullet}(X,Z).
\end{equation}

Now let us introduce some definitions. For a sheaf $\mathfrak{F}$ on a topological space $X$ and a section $s$ of $\mathfrak{F}$ over the open subset $U$ of $X$, we define the support $Supp(s)$ of $s$ to be
$$ Supp(s) = \{ x\in U : s_x \neq 0\}.
$$
Consider a locally closed subspace $ W $ of $X$. This means that any point of  $W$ has an open neighbourhood $V$ in $X$ such that $W\cap V$ is closed relative to $V$. The inclusion of $W$ in $X$ is denoted by $h: W \rightarrow X.$

\begin{defn}
For a sheaf $\mathfrak{E}$ on $W$, $h_{!}\mathfrak{E}$ denotes the sheaf on $X$ whose sections over an open set $U$ of $X$ is given by
$$  \Gamma(U,~h_{!}\mathfrak{E}) = \{s \in \Gamma(W\cap U,~\mathfrak{E}) : \text{Supp(s) ~is~ closed~ relative~ to~ U}\}.
$$
\end{defn}

 Then we turn to the complex manifold $X$. As usual, set $U:=X-Z$ and let $ j: U\rightarrow X$ be the inclusion. A sheaf $\mathfrak{F}$ on $X$ gives rise to an exact sequence,
\begin{equation} \label{2.8}
\xymatrix{
0 \ar[r]^{} & j_!j^{\ast}\mathfrak{F} \ar[r]& \mathfrak{F} \ar[r] & i_{\ast}i^{\ast}\mathfrak{F} \ar[r]^{} &0.}
\end{equation}
According to Lemma \ref{lemma 2.2}, one has $ \underline{\mathbb{R}}_{Z,i^{\ast}\theta} = i^{\ast} \underline{\mathbb{R}}_{X,\theta} $. The sequences \eqref{2.5} and \eqref{2.8} yield
$$ \underline{\mathbb{R}}_{X,Z,\theta} =  j_!j^{\ast}\underline{\mathbb{R}}_{X,\theta}.$$
Hence, $H^{p}(X,~\underline{\mathbb{R}}_{X,Z,\theta})$ is precisely the relative cohomology $H^{p}(X,Z;~\underline{\mathbb{R}}_{X,\theta})$ defined in the \cite[Definition IV.8.1]{iv} and \cite[Proposition II.12.3]{Br} since
\begin{equation} \label{2.9}
H^{p}(X,~\underline{\mathbb{R}}_{X,Z,\theta}) \cong H^{p}(X,~j_!j^{\ast}\underline{\mathbb{R}}_{X,\theta}) \cong H^{p}(X,Z;~\underline{\mathbb{R}}_{X,\theta}).
\end{equation}

Finally, we apply the definition of \emph{relative homeomorphism} as in \cite{Br}, that is a closed map $f$ of pairs $(X, A) \rightarrow (Y, B)$ such that $ A= f^{-1}B $ and the induced  map $ X-A\rightarrow Y-B $ is a homeomorphism.

\begin{lem} [{\cite[Corollary 12.5]{Br}}]  \label{lemma 2.4}
If $(X,A)$ and $(Y,B)$ are two closed paracompact pairs and $ f: (X,A) \rightarrow (Y,B)$ is a relative homeomorphism, then  for any sheaf $ \mathfrak{A}$ on Y,
$$ f^{\ast}: H^{\ast}(Y,B;\mathfrak{A}) \rightarrow H^{\ast}(X,A;f^{\ast}\mathfrak{A}) $$
is an isomorphism.
\end{lem}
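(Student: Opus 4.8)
The plan is to reduce the statement to a computation of the direct images of an extension-by-zero sheaf along $f$, via the Leray spectral sequence. Recall from \eqref{2.9} that for a closed pair $(X,A)$ and a sheaf $\mathfrak{F}$ on $X$ the relative cohomology is $H^p(X,A;\mathfrak{F}) = H^p(X;(j_X)_! j_X^*\mathfrak{F})$, where $j_X : X-A \hookrightarrow X$ is the open inclusion and $(j_X)_!$ is extension by zero. Write $U := Y-B$, $V := X-A$, and let $j_Y : U \hookrightarrow Y$, $j_X : V \hookrightarrow X$ be the open inclusions. Since $f$ is a relative homeomorphism, $V = f^{-1}(U)$, the restriction $g := f|_V : V \to U$ is a homeomorphism, and the square built from $j_X, j_Y, f, g$ is Cartesian. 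Base change for extension by zero along an open immersion then gives $f^*\big((j_Y)_! j_Y^*\mathfrak{A}\big) = (j_X)_! g^*(j_Y^*\mathfrak{A}) = (j_X)_! j_X^*(f^*\mathfrak{A}) =: \mathcal{G}$, using $f\circ j_X = j_Y\circ g$; thus the pullback $f^*$ is precisely the map $H^*(Y;(j_Y)_! j_Y^*\mathfrak{A}) \to H^*(X;\mathcal{G})$ induced by $f$ on sheaf cohomology.

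First I would set up the Leray spectral sequence $H^p(Y;R^q f_*\mathcal{G}) \Rightarrow H^{p+q}(X;\mathcal{G})$, legitimate because $f$ is continuous and both spaces are paracompact. It then suffices to prove that $R^q f_*\mathcal{G} = 0$ for $q>0$ and $f_*\mathcal{G} = (j_Y)_! j_Y^*\mathfrak{A}$: the spectral sequence collapses and identifies $H^*(X;\mathcal{G})$ with $H^*(Y;(j_Y)_! j_Y^*\mathfrak{A}) = H^*(Y,B;\mathfrak{A})$ through its edge homomorphism, which is readily checked to coincide with $f^*$.

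To compute the direct image sheaves I would examine the stalks over the two strata separately. Over a point $y\in U$ the fibre $f^{-1}(y)$ is a single point and $f$ is a homeomorphism near it, so $(f_*\mathcal{G})_y$ is the stalk of $j_Y^*\mathfrak{A}$ and $(R^q f_*\mathcal{G})_y = 0$ for $q>0$; hence $f_*\mathcal{G}|_U = j_Y^*\mathfrak{A}$. Over a point $b\in B$ the fibre satisfies $f^{-1}(b) \subseteq f^{-1}(B) = A$, so $\mathcal{G}|_{f^{-1}(b)} = 0$ because $\mathcal{G}$ is supported in $X-A$. The key input is the base-change identity $(R^q f_*\mathcal{G})_b \cong H^q\big(f^{-1}(b);\mathcal{G}|_{f^{-1}(b)}\big)$, which forces all these stalks to vanish for every $q$, including $q=0$. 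Combining the two strata shows $f_*\mathcal{G} = (j_Y)_! j_Y^*\mathfrak{A}$ and that all higher direct images vanish, as required.

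The main obstacle is justifying this base-change identity over $b\in B$. The stalk $(R^q f_*\mathcal{G})_b$ is the direct limit $\varinjlim_{V'\ni b} H^q(f^{-1}(V');\mathcal{G})$, and two facts are needed to evaluate it. First, because $f$ is a \emph{closed} map, the family $\{\,f^{-1}(V') : V'\ni b \text{ open}\,\}$ is cofinal among open neighbourhoods of the fibre $f^{-1}(b)$; this is exactly the neighbourhood characterization of closed maps. Second, since $X$ is paracompact the closed subset $f^{-1}(b)$ is taut, so the continuity theorem for sheaf cohomology turns the limit into $H^q(f^{-1}(b);\mathcal{G}|_{f^{-1}(b)})$. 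These two facts are precisely where the hypotheses \emph{closed map} and \emph{paracompact pairs} enter, and once they are in place the vanishing of $\mathcal{G}|_{f^{-1}(b)}$ closes the argument.
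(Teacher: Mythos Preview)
The paper does not supply a proof of this lemma; it merely cites \cite[Corollary 12.5]{Br} and uses the result as a black box, so there is no in-paper argument to compare your attempt against.

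Your proposal is correct. The identification $f^*\big((j_Y)_! j_Y^*\mathfrak{A}\big) \cong (j_X)_! j_X^* f^*\mathfrak{A}$ via open base change for $j_!$ is standard, and reducing via the Leray spectral sequence to a stalkwise computation of $R^q f_*\mathcal{G}$ is a natural strategy. The substantive step is the stalk at $b\in B$, and you have correctly isolated the two ingredients that make it work: closedness of $f$ ensures the tubes $f^{-1}(V')$ are cofinal among open neighbourhoods of the fibre, and paracompactness of $X$ makes the closed fibre $f^{-1}(b)$ taut so that the continuity theorem identifies the direct-limit stalk with $H^q\big(f^{-1}(b);\mathcal{G}|_{f^{-1}(b)}\big)=0$. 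This is close in spirit to Bredon's own treatment, which is organised around his notion of taut pairs and the comparison results of Chapter~II, \S\S10--12 rather than an explicit spectral sequence; the underlying mechanism---closed base change on stalks plus tautness---is the same.
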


\section{Proof of Main Theorem \ref{main-thm}}\label{pmt}
We combine the relative cohomological method in \cite{ryy,ryy2,ryyy} with the sheaf theory of \cite{meng2} induced as above, to give a new simple proof of the blow-up formula for Morse--Novikov cohomology with an explicit isomorphism.
\subsection{Relative cohomological method}
From now on, we set $U:=X-Z$ and let $i: Z \hookrightarrow X$ and $ j: U\rightarrow X$ be the inclusions.
Let $\{\mathcal{A}^{\bullet}(X,Z), d_\theta\}$ be the \emph{relative Morse--Novikov complex}.
Then we obtain a short exact sequence
$$\label{mainexactseq1}
\xymatrix@C=0.5cm{
0 \ar[r]^{} & \mathcal{A}^{\bullet}(X,Z) \ar[r]^{} & \mathcal{A}^{\bullet}(X)  \ar[r]^{i^{\ast}} & \mathcal{A}^{\bullet}(Z) \ar[r]^{} & 0}
$$
and analogously,
$$\label{mainexactseq2}
\xymatrix@C=0.5cm{
0 \ar[r]^{} & \mathcal{A}^{\bullet}(\tilde{X},E) \ar[r]^{} & \mathcal{A}^{\bullet}(\tilde{X})  \ar[r]^{\tilde{i}^{\ast}} & \mathcal{A}^{\bullet}(E) \ar[r]^{} & 0.}
$$
In particular, the blow-up diagram \eqref{blow-up} induces a commutative diagram of short exact sequences
\begin{equation}\label{3.1}
\xymatrix@C=0.5cm{
 0 \ar[r]^{} & \mathcal{A}^{\bullet}(X,Z) \ar[d]_{\pi^{* }} \ar[r]^{} & \mathcal{A}^{\bullet}(X) \ar[d]_{\pi^{* }} \ar[r]^{i^{* }} &  \mathcal{A}^{\bullet}(Z)\ar[d]_{\pi_{E}^*} \ar[r]^{} & 0 \\
0 \ar[r] &  \mathcal{A}^{\bullet}(\tilde{X},E) \ar[r]^{} &
   \mathcal{A}^{\bullet}(\tilde{X}) \ar[r]^{\tilde{i}^{* }} &
   \mathcal{A}^{\bullet}(E) \ar[r] &0. }
\end{equation}
Then the commutative diagram \eqref{3.1} gives a commutative ladder of long exact sequences
\begin{equation}\label{3.2}
\xymatrix@C=0.5cm{
   \cdots \ar[r]^{} & H^{k}_{\theta}(X,Z) \ar[d]_{\pi^{* }} \ar[r]^{} &H^{k}_{\theta}(X) \ar[d]_{\pi^{* }} \ar[r]^{} & H^{k}_{i^\ast\theta}(Z)\ar[d]_{\pi_{E}^*} \ar[r]^{} & H^{k+1}_{\theta}(X,Z)\ar[d]_{\pi^{* }} \ar[r]^{} & \cdots \\
   \cdots \ar[r] & H^{k}_{\pi^\ast\theta}(\tilde{X},E) \ar[r]^{} &
  H^{k}_{\pi^\ast\theta}(\tilde{X}) \ar[r]^{} &
  H^{k}_{\pi_E^\ast i^\ast\theta}(E)\ar[r]^{} &
  H^{k+1}_{\pi^\ast\theta}(\tilde{X},E) \ar[r] & \cdots. }
\end{equation}

\begin{prop}
$ \pi^\ast: H_\theta^k(X,Z) \rightarrow H_{\pi^\ast\theta}^k(\widetilde{X},E) $ is isomorphic.
\end{prop}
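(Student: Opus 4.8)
\subsection*{Proof proposal}

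The plan is to reduce the statement to the sheaf-theoretic relative homeomorphism lemma. First I would translate both sides into sheaf cohomology. By \eqref{2.7} and \eqref{2.9} we have
$$
H_\theta^k(X,Z) \cong H^k(X,~\underline{\mathbb{R}}_{X,Z,\theta}) \cong H^k(X,Z;~\underline{\mathbb{R}}_{X,\theta}),
$$
and, applying the same chain of identifications to the pair $(\widetilde{X},E)$ together with the closed $1$-form $\pi^\ast\theta$ on $\widetilde X$,
$$
H_{\pi^\ast\theta}^k(\widetilde{X},E) \cong H^k(\widetilde{X},~\underline{\mathbb{R}}_{\widetilde{X},E,\pi^\ast\theta}) \cong H^k(\widetilde{X},E;~\underline{\mathbb{R}}_{\widetilde{X},\pi^\ast\theta}).
$$
(The construction in Subsection \ref{sheaf 2.3} applies verbatim to $(\widetilde X, E)$ since $E$ is a closed submanifold of $\widetilde X$.)

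Next I would check that $\pi:(\widetilde{X},E)\to(X,Z)$ is a relative homeomorphism of closed paracompact pairs in the sense recalled before Lemma \ref{lemma 2.4}. Indeed, $Z$ and $E$ are closed submanifolds, so both pairs are closed paracompact pairs; the blow-up morphism $\pi$ is projective, hence proper, hence a closed map; by \eqref{excep} we have $E=\pi^{-1}(Z)$; and by the defining property of the blow-up, $\pi:\widetilde{X}-E\to X-Z$ is a biholomorphism, in particular a homeomorphism. Therefore Lemma \ref{lemma 2.4}, applied to the sheaf $\underline{\mathbb{R}}_{X,\theta}$ on $X$, gives that
$$
\pi^\ast: H^k(X,Z;~\underline{\mathbb{R}}_{X,\theta}) \longrightarrow H^k(\widetilde{X},E;~\pi^{-1}\underline{\mathbb{R}}_{X,\theta})
$$
is an isomorphism. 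Finally, Lemma \ref{lemma 2.2} identifies the inverse image sheaf $\pi^{-1}\underline{\mathbb{R}}_{X,\theta}\cong\underline{\mathbb{R}}_{\widetilde{X},\pi^\ast\theta}$, so the target is $H^k(\widetilde{X},E;~\underline{\mathbb{R}}_{\widetilde{X},\pi^\ast\theta})$, and composing with the identifications above yields the claimed isomorphism $H_\theta^k(X,Z)\cong H_{\pi^\ast\theta}^k(\widetilde{X},E)$.

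The one point that needs care — and the step I expect to be the main obstacle — is verifying that the abstract sheaf-cohomology isomorphism just produced is the one \emph{induced by the form-level pullback} $\pi^\ast:\mathcal{A}^\bullet(X,Z)\to\mathcal{A}^\bullet(\widetilde X,E)$, so that the proposition is literally about the map appearing in the ladder \eqref{3.2}. For this I would invoke the naturality of the comparison morphisms $\rho$ of Subsection \ref{sheaf 2.3}: the fine resolution $\underline{\mathbb{R}}_{X,Z,\theta}\to\mathcal{A}^\bullet_{X,Z}$ pulls back under $\pi$ to a map of resolutions over $\widetilde X$, compatible with $\pi^{-1}\underline{\mathbb{R}}_{X,\theta}\cong\underline{\mathbb{R}}_{\widetilde X,\pi^\ast\theta}$ and with $\pi^\ast$ on global sections, so that the square relating $H_\theta^\bullet(X,Z)\to H^\bullet(X,\underline{\mathbb{R}}_{X,Z,\theta})$, the form-pullback $\pi^\ast$, and the sheaf-pullback $\pi^\ast$ commutes. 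Chasing this compatibility through \eqref{2.9} — where the identification $\underline{\mathbb{R}}_{X,Z,\theta}=j_!j^\ast\underline{\mathbb{R}}_{X,\theta}$ is also functorial in $(X,Z)$ — then shows that the isomorphism of Lemma \ref{lemma 2.4} coincides with the vertical map $\pi^\ast:H_\theta^k(X,Z)\to H_{\pi^\ast\theta}^k(\widetilde X,E)$, completing the proof.
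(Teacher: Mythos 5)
Your proposal is correct and follows exactly the route the paper intends: the paper's own proof is the one-line statement that the result ``is obtained directly from Lemma \ref{lemma 2.2}, the equation \eqref{2.7}, the equation \eqref{2.9} and Lemma \ref{lemma 2.4},'' and you have simply unpacked that chain of identifications, verified the relative-homeomorphism hypotheses for the blow-up, and (going slightly beyond the paper) checked that the resulting sheaf-theoretic isomorphism agrees with the form-level pullback in the ladder \eqref{3.2}.
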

\begin{proof}
It is obtained directly from Lemma \ref{lemma 2.2}, the equation \eqref{2.7}, the equation \eqref{2.9} and Lemma \ref{lemma 2.4}.
\end{proof}

One needs a proposition of Meng \cite[Corollary 2.9]{meng2} by the projection formula and here we give another proof following \cite{Wells74}, which is to be postponed in the next section.

\begin{prop} \label{prop 3.2}
Let $ \pi: \widetilde{X}\rightarrow X $ be a proper smooth map of oriented connected smooth manifolds with the same dimension and $\emph{deg}\ \pi\neq 0$. If $\theta$ is a closed $1$-form on $X$, then $\pi^*:H^*_{\theta}(X)\rightarrow H^*_{\pi^\ast{\theta}}(\widetilde{X})$ is injective.
\end{prop}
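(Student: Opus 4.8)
The plan is to reduce the statement to a standard fact about the pushforward (integration along the fibre) for proper maps of equal dimension, adapted to the twisted differential $d_\theta$. Concretely, I would construct a wrong-way map $\pi_*:\mathcal{A}^\bullet(\widetilde{X})\to\mathcal{A}^\bullet(X)$ and show it is a chain map for the pair $(d_{\pi^*\theta},d_\theta)$, so that it descends to $\pi_*:H^*_{\pi^*\theta}(\widetilde{X})\to H^*_\theta(X)$. The key algebraic identity to verify is the projection formula at the level of forms, namely $\pi_*(\pi^*\beta\wedge\alpha)=\beta\wedge\pi_*\alpha$ for $\beta\in\mathcal{A}^\bullet(X)$, together with compatibility of $\pi_*$ with $d$; combining these gives
\[
\pi_*(d_{\pi^*\theta}\alpha)=\pi_*(d\alpha+\pi^*\theta\wedge\alpha)=d(\pi_*\alpha)+\theta\wedge\pi_*\alpha=d_\theta(\pi_*\alpha).
\]

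Next I would compute the composition $\pi_*\circ\pi^*$ on $\mathcal{A}^\bullet(X)$. Since $\pi$ is proper of degree $d:=\mathrm{deg}\,\pi\neq 0$ between oriented connected manifolds of the same dimension, integration along the fibre of $\pi^*\omega$ returns $d\cdot\omega$ (the generic fibre is $d$ points counted with sign); by the projection formula this already gives $\pi_*\pi^*\omega=\pi_*(\pi^*\omega\wedge 1)=\omega\wedge\pi_*1=d\cdot\omega$ once one checks $\pi_*1=d$. Passing to cohomology, $\pi_*\circ\pi^*=d\cdot\mathrm{id}$ on $H^*_\theta(X)$, and since we are over $\mathbb{R}$ and $d\neq 0$ this endomorphism is invertible; hence $\pi^*$ is injective, which is exactly the claim.

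I expect the main obstacle to be setting up $\pi_*$ cleanly in the twisted setting, since $\pi$ here is a blow-up and thus not a fibre bundle — the fibres jump dimension over $Z$. The cleanest route is not to integrate over honest fibres but to invoke the Poincaré-duality/pushforward formalism: for a proper map $f:M\to N$ of oriented manifolds one has $f_*:\mathcal{A}^\bullet_c(M)\to\mathcal{A}^{\bullet-(\dim M-\dim N)}_c(N)$ dual to $f^*$ on currents, which here preserves degree; one then checks it restricts to all forms (not just compactly supported ones) because $\pi$ is proper, and that it intertwines $d_{\pi^*\theta}$ and $d_\theta$ via the projection formula above. An alternative, following \cite{Wells74} as the authors announce, is to work with the de Rham complex of currents and use that $\pi_*$ on currents is manifestly a chain map for $d$, extend by $\theta\wedge$, and then identify smooth forms inside currents; the twisting by $\theta$ is harmless since $\theta$ and $\pi^*\theta$ are genuine (untwisted) closed $1$-forms and the projection formula is $C^\infty(X)$-linear. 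Either way, once $\pi_*$ is in hand the argument is the three-line computation above, so the real work is the construction and the verification of the projection formula, after which injectivity of $\pi^*$ is immediate from $\pi_*\pi^*=(\mathrm{deg}\,\pi)\cdot\mathrm{id}$ over a field of characteristic zero.
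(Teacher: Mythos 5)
Your proposal is correct and, in its final form, is essentially the paper's own argument: the paper also passes to the complex of currents, uses the pushforward $\pi_{\flat}$ dual to $\pi^{*}$ (a chain map for $d_{\pi^{*}\theta}$ and $d_{\theta}$ by the duality formula), and invokes Wells' identity $\mu\varrho=\pi_{\flat}\widetilde{\varrho}\pi^{*}$ with $\mu=\deg\pi\neq0$ to conclude injectivity. You are right to discard the naive ``integration along the fibre'' on smooth forms (which fails since $\pi$ is not a submersion); the currents version you describe as the alternative is exactly what is needed and what the paper does.
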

Note that since
$\pi:\tilde{X}\rightarrow X$ and $\pi_{E}:E\rightarrow Z$ are proper surjective holomorphic maps, the pullback $\pi^{\ast}:H^{k}_{\theta}(X)\rightarrow H^{k}_{\pi^\ast\theta}(\tilde{X})$ and $ \pi_{E}^{\ast}:H^{k}_{i^\ast\theta}(Z)\rightarrow H^{k}_{\pi_E^\ast i^\ast\theta}(E)$ are injective. The next proposition is crucial to build the isomorphism of main theorem.

\begin{prop}[{\cite[Proposition 3.3]{YZ15}}]\label{cmm-diag}
 Consider a commutative ladder of abelian groups such that its horizontal rows are exact
\begin{equation*}
\xymatrix@C=0.5cm{
  \cdots \ar[r]^{} & A_1 \ar[d]_{i_1} \ar[r]^{f_1}& A_2 \ar[d]_{i_2} \ar[r]^{f_2}& A_3 \ar[d]_{i_3} \ar[r]^{f_3}& A_4 \ar[d]_{i_4} \ar[r]^{f_4}& A_5 \ar[d]_{i_5} \ar[r]^{} & \cdots \\
\cdots \ar[r]^{} & B_1 \ar[r]^{g_1}& B_2 \ar[r]^{g_2}& B_3 \ar[r]^{g_3}& B_4 \ar[r]^{g_4}& B_5   \ar[r]^{} & \cdots. }
\end{equation*}
Assume that $i_1$ is epimorphic, $i_2,i_3,i_5$ are monomorphic and $i_4$ is isomorphic.
Then there holds a natural isomorphism
$$
\mathrm{coker}\,i_2\cong\mathrm{coker}\,i_3.
$$
\end{prop}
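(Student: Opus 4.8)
The plan is to prove the isomorphism $\operatorname{coker} i_2 \cong \operatorname{coker} i_3$ by a direct diagram chase, exploiting that $i_4$ is an isomorphism to ``transport'' the obstruction from position $3$ to position $2$. First I would note that since $i_4$ is isomorphic and $i_5$ is monomorphic, a standard four-lemma-type argument shows that the map $f_3 : A_3 \to A_4$ and $g_3 : B_3 \to B_4$ interact well with the vertical maps; more precisely, I would show $i_3$ restricted to $\ker f_3$ has the same cokernel behaviour as $i_2$ modulo $\operatorname{im} f_1$. Concretely, I would define a candidate map $\operatorname{coker} i_3 \to \operatorname{coker} i_2$ (or its inverse) as follows: given $b_3 \in B_3$ representing a class in $\operatorname{coker} i_3$, push it forward to $g_3(b_3) \in B_4$; since $i_4$ is onto, $g_3(b_3) = i_4(a_4)$ for some $a_4 \in A_4$, and since $g_4 g_3 = 0$ we get $i_5(f_4(a_4)) = g_4(i_4(a_4)) = g_4 g_3(b_3) = 0$, so $f_4(a_4) = 0$ because $i_5$ is mono; hence $a_4 = f_3(a_3)$ for some $a_3 \in A_3$ by exactness of the top row.

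Then I would check that $g_3(i_3(a_3)) = i_4(f_3(a_3)) = i_4(a_4) = g_3(b_3)$, so $b_3 - i_3(a_3) \in \ker g_3 = \operatorname{im} g_2$, say $b_3 - i_3(a_3) = g_2(b_2)$. The assignment $[b_3] \mapsto [b_2] \in \operatorname{coker} i_2$ is the desired map, and the bulk of the work is verifying it is well-defined: independence of the choice of $a_4$ (using $i_4$ injective and exactness at $A_4$), independence of the choice of $a_3$ (using exactness at $A_3$, i.e. $\ker f_3 = \operatorname{im} f_2$, together with $i_2$ having a cokernel that kills $\operatorname{im} f_2$ after one more step with $f_1$ and the epimorphism $i_1$), and independence of the representative $b_3$ modulo $\operatorname{im} i_3$. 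I would build the inverse map symmetrically: given $b_2 \in B_2$, set $b_3 := g_2(b_2)$ and check $[b_3] \in \operatorname{coker} i_3$ is well-defined modulo $\operatorname{im} i_3$, using that $i_2$ is mono and $i_1$ is epi to handle the ambiguity coming from $\ker g_2 = \operatorname{im} g_1$. Finally I would verify the two maps are mutually inverse, which is essentially immediate once well-definedness is established.

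The main obstacle I anticipate is the well-definedness of the forward map with respect to the choice of $a_3$: two preimages of $a_4$ under $f_3$ differ by an element of $\ker f_3 = \operatorname{im} f_2$, and I must show $i_3(\operatorname{im} f_2)$ contributes nothing new to $\operatorname{coker} i_2$ after transporting through $g_2$ — this is exactly where the hypothesis that $i_1$ is epimorphic enters, via $\operatorname{im} f_2 = f_2(\operatorname{im} i_1 + \ker f_2)$ wait, more carefully, one writes the difference as $f_2(a_2)$ and then $g_2(b_2)$ changes by $g_2 i_2 \cdot (\text{something})$ only after using that $a_2$ itself can be adjusted by $\operatorname{im} f_1 = \operatorname{im}(f_1 i_1)$; chasing this adjustment through the commutative squares is the delicate bookkeeping step. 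An alternative, cleaner route I might prefer is to invoke the snake lemma or the long exact sequence of the mapping cone of the vertical chain map, reducing the claim to the statement that the cone is acyclic in a range forced by the hypotheses on $i_1, i_4, i_5$; but since the paper cites this as Proposition 3.3 of \cite{YZ15}, I would likely just present the direct chase for self-containedness and refer the reader there for the routine verifications.
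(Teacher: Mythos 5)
Your diagram chase is correct and is precisely the standard argument for this ``four/five--lemma on cokernels'': the isomorphism is the map $\bar g_2\colon\mathrm{coker}\,i_2\to\mathrm{coker}\,i_3$ induced by $g_2$, and the map you build (push $b_3$ to $a_4=i_4^{-1}g_3(b_3)$, lift to $a_3$, correct by $i_3(a_3)$, pull back along $g_2$) is exactly its inverse, with the hypotheses entering where you indicate --- $i_4$ epi and $i_5$ mono for surjectivity, $i_4$ mono for injectivity, and $i_1$ epi to absorb the $\ker g_2=\mathrm{im}\,g_1=i_2(f_1(A_1))\subseteq\mathrm{im}\,i_2$ ambiguity (note $i_2,i_3$ mono are never actually used). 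The paper gives no proof of its own, quoting \cite[Proposition 3.3]{YZ15}, and your chase is the same argument as the cited one, so there is nothing substantive to compare beyond the cosmetic point that presenting the forward map $\bar g_2$ first makes the well-definedness checks trivial and moves all the work into injectivity and surjectivity.
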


Based on the Proposition \ref{cmm-diag}, one can get an isomorphism from commutative ladder \eqref{3.2}
\begin{equation}\label{3.3}
{H^{k}_{\pi^\ast\theta}(\tilde{X})}/{\pi^{\ast}H^{k}_{\theta}(X)}\cong
{H^{k}_{\pi_E^\ast i^\ast\theta}(E)}/{\pi_{E}^{\ast}H^{k}_{i^\ast\theta}(Z)}.
\end{equation}

Now we need to figure out the relationship between $ H^{k}_{\pi_E^\ast i^\ast\theta}(E) $ and $ H^{k}_{i^\ast\theta}(Z) $. Recall $E:=\pi^{-1}(Z)\cong \mathbb{P}(\mathcal{N}_{Z/X})$. Then we need the projective vector bundle case of Leray--Hirsch theorem. Here we refer to \cite{ryy} and \cite{ryy2} for the Borel spectral sequence approach.

\begin{prop} [{\cite[Lemma 4.4]{meng2}}] \label{proj-bundle}
Let $\pi:\mathbb{P}(E)\rightarrow X$ be the projectivization of a holomorphic vector bundle $E$ of rank $r$ on a complex manifold $X$ and $\theta$ a closed $1$-form on $X$. Assume that $\tilde{\theta}=\pi^*\theta$ and $h=c_1(\mathcal{O}_{\mathbb{P}(E)}(-1))\in H_{dR}^2({\mathbb{P}(E)})$ is the first Chern class of the universal line bundle $\mathcal{O}_{\mathbb{P}(E)}(-1)$ on ${\mathbb{P}(E)}$. Then $\pi^*(\bullet)\wedge\bullet$ gives isomorphisms of graded vector spaces
\begin{displaymath}
H_{\theta}^*(X)\otimes_{\mathbb{R}}\emph{span}_{\mathbb{R}}\{1,\cdots,h^{r-1}\}\tilde{\rightarrow}H_{\tilde{\theta}}^*(\mathbb{P}(E)).
\end{displaymath}
\end{prop}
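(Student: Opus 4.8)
The plan is to deduce Proposition~\ref{proj-bundle} from the classical projective bundle formula in de Rham cohomology by a Leray--Hirsch type argument adapted to the twisted differential $d_{\tilde\theta}$. First I would set up the filtration/spectral sequence machinery: since $\mathbb{P}(E)\to X$ is a locally trivial fiber bundle with fiber $\mathbb{P}^{r-1}$, one can either work with the Leray spectral sequence of $\pi$ applied to the locally constant sheaf $\underline{\mathbb{R}}_{\mathbb{P}(E),\tilde\theta}$, or more concretely use the Borel spectral sequence attached to the bundle as in \cite{ryy,ryy2}. The key input is that $\pi^{-1}\underline{\mathbb{R}}_{X,\theta}\cong\underline{\mathbb{R}}_{\mathbb{P}(E),\tilde\theta}$ by Lemma~\ref{lemma 2.2}, so that the sheaf we are resolving on the total space is simply the pullback of the one on the base; this is exactly the feature that lets the ordinary projective bundle argument go through with only cosmetic changes.

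The main steps, in order, are as follows. Step 1: observe that $h=c_1(\mathcal{O}_{\mathbb{P}(E)}(-1))$ is represented by a $d$-closed (hence $d_{\tilde\theta}$-closed, since wedging a closed form with a closed $1$-form and adding $d$ of it is compatible) real $2$-form, and that $\pi^*\colon\mathcal{A}^\bullet(X)\to\mathcal{A}^\bullet(\mathbb{P}(E))$ intertwines $d_\theta$ with $d_{\tilde\theta}$ because $\tilde\theta=\pi^*\theta$; therefore the map $\Psi\colon\sum_j \alpha_j\otimes h^j\mapsto\sum_j \pi^*\alpha_j\wedge h^j$ is a well-defined morphism of complexes $\big(\mathcal{A}^\bullet(X)\otimes\mathrm{span}\{1,\dots,h^{r-1}\},\,d_\theta\otimes\mathrm{id}\big)\to\big(\mathcal{A}^\bullet(\mathbb{P}(E)),d_{\tilde\theta}\big)$, hence induces the claimed graded map on cohomology. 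Step 2: prove it is an isomorphism locally over $X$, i.e. over a trivializing open $V\subseteq X$ with $\pi^{-1}(V)\cong V\times\mathbb{P}^{r-1}$ and $\tilde\theta|_{\pi^{-1}(V)}=\mathrm{pr}_V^*(\theta|_V)$; by a Künneth argument for $d_\theta$-cohomology (the fiber $\mathbb{P}^{r-1}$ carries the \emph{untwisted} de Rham cohomology, spanned by $1,h,\dots,h^{r-1}$, since the restriction of $\tilde\theta$ to a fiber is exact), one gets $H^*_{\tilde\theta}(V\times\mathbb{P}^{r-1})\cong H^*_\theta(V)\otimes H^*_{\mathrm{dR}}(\mathbb{P}^{r-1})$ and the powers of $h$ restrict to a basis of $H^*_{\mathrm{dR}}(\mathbb{P}^{r-1})$. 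Step 3: globalize: the $h^j$ give a basis of the cohomology of every fiber, so the Leray (or Borel) spectral sequence of $\pi$ with coefficients in $\underline{\mathbb{R}}_{\mathbb{P}(E),\tilde\theta}=\pi^{-1}\underline{\mathbb{R}}_{X,\theta}$ has $E_2^{p,q}=H^p(X,\underline{\mathbb{R}}_{X,\theta})\otimes H^q(\mathbb{P}^{r-1};\mathbb{R})$, and the presence of globally defined closed classes $h^j$ restricting to the fiberwise basis forces the spectral sequence to degenerate at $E_2$ (the standard Leray--Hirsch mechanism: multiplication by the $h^j$ splits the edge maps). Reassembling the associated graded recovers the stated isomorphism, and the explicit form $\pi^*(\bullet)\wedge(\bullet)$ is built into the argument.

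The step I expect to be the main obstacle is Step 3, the degeneration/globalization: one must make sure that the twisting by $\theta$ does not obstruct the Leray--Hirsch splitting. The point to check carefully is that the local isomorphisms of Step 2 are compatible on overlaps \emph{as modules over} $H^*_\theta$ of the base — i.e. that the comparison is natural in $V$ — which is where the identification $\pi^{-1}\underline{\mathbb{R}}_{X,\theta}\cong\underline{\mathbb{R}}_{\mathbb{P}(E),\tilde\theta}$ (Lemma~\ref{lemma 2.2}) does the real work, since it says the only ``new'' sheaf-theoretic data on the total space is pulled back from the base and the fibral data is the untwisted one. Once this naturality is in place, the usual five-lemma induction on a good cover of $X$ (or the spectral sequence comparison) upgrades the local statement to the global isomorphism with no further twisting subtlety. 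I would also remark that, alternatively, one can invoke \cite[Lemma~4.4]{meng2} directly, as the statement is quoted verbatim from there; the sketch above is the self-contained route via Borel spectral sequences in the spirit of \cite{ryy,ryy2}.
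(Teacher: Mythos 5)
The paper does not actually prove this proposition: it is imported verbatim as \cite[Lemma 4.4]{meng2}, with a pointer to \cite{ryy,ryy2} for the Borel spectral sequence approach, so there is no in-paper argument to compare against line by line. Your sketch is a correct and standard self-contained route, and it is in the same spirit as what the cited sources do (Meng proves a Leray--Hirsch type theorem for Morse--Novikov cohomology; \cite{ryy,ryy2} run the Borel spectral sequence). The three steps are sound: the chain-map property in Step 1 is immediate from $d(h^j)=0$ and $\pi^*\theta=\tilde\theta$; in Step 2 the restriction of $\tilde\theta$ to a fiber is in fact identically zero (not merely exact), so the fiber contributes untwisted de Rham cohomology of $\mathbb{P}^{r-1}$, and the K\"unneth argument is legitimate because that factor is compact with finite-dimensional cohomology. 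Two points deserve more care than your sketch gives them. First, for the globalization you invoke ``the usual five-lemma induction on a good cover,'' but a general complex manifold $X$ need not admit a \emph{finite} good cover; one must either use the standard extension of Leray--Hirsch to arbitrary covers (induction over finite unions, then disjoint unions, then a direct-limit/Zorn argument), or work directly with the Leray spectral sequence, in which case one should justify that $R^q\pi_*\pi^{-1}\underline{\mathbb{R}}_{X,\theta}\cong\underline{\mathbb{R}}_{X,\theta}\otimes H^q(\mathbb{P}^{r-1};\mathbb{R})$ is a \emph{constant} local system in $q$ (true here because the fiberwise cohomology is generated by restrictions of the global classes $h^j$). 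Second, degeneration of the spectral sequence only gives an isomorphism on the associated graded; you should add the (routine) remark that your map $\Psi$ is compatible with the filtrations and induces that isomorphism, so that it is itself an isomorphism. With those two points made explicit, the proposal is a complete proof.
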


According to Proposition \ref{proj-bundle} and \eqref{excep}, one gets the isomorphism
\begin{equation} \label{3.4}
{H^{k}_{\pi_E^\ast i^\ast\theta}(E)}/{\pi_{E}^{\ast}H^{k}_{i^\ast\theta}(Z)}\cong
\bigoplus_{j=1}^{r-1}H^{k-2j}_{i^\ast\theta}(Z).
\end{equation}
Combing \eqref{3.3} and \eqref{3.4} and observing that $\pi^{\ast}$ is injective, one can obtain the Morse--Novikov blow-up isomorphic formula
$$
H^{k}_{\pi^\ast\theta}(\tilde{X})
\cong H^{k}_{\theta}(X)\oplus \left(\bigoplus_{j=1}^{r-1}H^{k-2j}_{i^\ast\theta}(Z) \right).
$$
This completes the proof for the isomorphism part of Main Theorem \ref{main-thm}.
\subsection*{Remark}
In \cite{meng3}, Meng studies the cohomologies with values in local system on smooth manifolds. Our approach should be applied to the blow-up formula for local system with finite rank.

\subsection{Proof of Proposition \ref{prop 3.2}}
First we assume that $X$ is an oriented differentiable manifold of dimension $n$. Let $\mathcal{A}_c^{k}(X)$ be the space of compactly supported differential forms in $\mathcal{A}^{k}(X)$. We define the vector space $\mathcal{D}^{k}(X)$ of currents of type $k$ on $X$ as the dual space of the topological vector space $\mathcal{A}_c^{n-k}(X)$. And let $\mathcal{D}_X^{k}$ be the sheaf of germs of $k$-currents. Similarly, define $\textrm{d}_\theta:\mathcal{D}_X^{k}\rightarrow\mathcal{D}_X^{k+1}$ as follows:
\begin{displaymath}
\textrm{d}_\theta T=\textrm{d}T+\theta\wedge T,
\end{displaymath}
for $T\in\mathcal{D}_X^{k}$. A direct check shows $$\textrm{d}_{\theta}T(\alpha)=(-1)^{k+1}T(\textrm{d}_{-\theta}\alpha),$$ for $\alpha\in\mathcal{A}_c^{n-k-1}(X)$. There is another  resolution of soft sheaves of $\underline{\mathbb{R}}_{X,\theta}$
\begin{displaymath}
\xymatrix{
0\ar[r] &\underline{\mathbb{R}}_{X,\theta}\ar[r]^{i} &\mathcal{D}_X^{0}\ar[r]^{\textrm{d}_\theta} &\mathcal{D}_X^{1}\ar[r]^{\textrm{d}_\theta}&\cdots\ar[r]^{\textrm{d}_\theta}&\mathcal{D}_X^{n}\ar[r]&0
,}
\end{displaymath}
where $i$ is the inclusion. Moreover, $\mathcal{A}_X^\bullet\hookrightarrow\mathcal{D}_X^{\bullet}$ induces isomorphisms $$H_\theta^*(X)\tilde{\rightarrow} H^*_{\mathscr{D},\theta}(X):= H^*(\mathcal{D}^{\bullet}(X),\textrm{d}_{\theta}).$$

We now turn to general differentiable manifolds. Suppose that $ \pi: \widetilde{X}\rightarrow X $ is a surjective proper differentiable mapping of orientable differentiable manifolds of the same dimension. Then one has the ladder
\begin{equation*}
\xymatrix@C=0.5cm{
  \cdots \ar[r]^{} & \mathcal{A}^k(\widetilde{X}) \ar[r]^{d_{\pi^\ast\theta}}& \mathcal{A}^{k+1}(\widetilde{X}) \ar[r]^{} & \cdots \\
\cdots \ar[r]^{} & \mathcal{A}^k(X)\ar[u]_{\pi^\ast} \ar[r]^{d_\theta}& \mathcal{A}^{k+1}(X)\ar[u]_{\pi^\ast} \ar[r]^{} & \cdots, }
\end{equation*}
and by duality, there is a push-out $ \pi_{\flat} $ of the dual ladder
\begin{equation*}
\xymatrix@C=0.5cm{
  \cdots \ar[r]^{} & \mathcal{D}^{k}(\widetilde{X}) \ar[d]_{\pi_{\flat}} \ar[r]^{d_{\pi^\ast\theta}}& \mathcal{D}^{k+1}(\widetilde{X}) \ar[d]_{\pi_{\flat}} \ar[r]^{} & \cdots \\
\cdots \ar[r]^{} & \mathcal{D}^{k}(X) \ar[r]^{d_\theta}& \mathcal{D}^{k+1}(X) \ar[r]^{} & \cdots. }
\end{equation*}
Now consider the diagram
\begin{equation} \label{3.5}
\xymatrix@C=0.5cm{
 & \mathcal{A}^k(\widetilde{X}) \ar[r]^{\widetilde{\varrho}}& \mathcal{D}^{k}(\widetilde{X}) \ar[d]_{\pi_{\flat}} \\
 & \mathcal{A}^k(X)\ar[u]_{\pi^\ast} \ar[r]^{\varrho}& \mathcal{D}^{k}(X), }
\end{equation}
where $ \varrho $ and $ \widetilde{\varrho} $ are the natural injections.

\begin{lem} [{\cite[Lemma 2.1]{Wells74}}] \label{lemma 3.5}
In the above diagram \eqref{3.5}, $\mu \varrho = \pi_\ast\widetilde{\varrho}\pi^\ast $ where $ \mu $ is the degree of the mapping $ \pi $.
\end{lem}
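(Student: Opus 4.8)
The plan is to unwind the definitions of the four maps appearing in \eqref{3.5} and then reduce the identity to the classical change-of-variables formula governing the mapping degree. Recall that $\mathcal{D}^k(X)$ is by definition the topological dual of $\mathcal{A}_c^{n-k}(X)$, and the natural injection $\varrho$ sends a smooth form $\alpha\in\mathcal{A}^k(X)$ to the current $\beta\mapsto\int_X\alpha\wedge\beta$; likewise $\widetilde\varrho$ on $\widetilde X$. Since $\pi$ is proper, $\pi^\ast$ carries $\mathcal{A}_c^{n-k}(X)$ into $\mathcal{A}_c^{n-k}(\widetilde X)$, and the push-out $\pi_\flat$ (written $\pi_\ast$ in the statement) is precisely the transpose of this map, so that $(\pi_\flat T)(\beta)=T(\pi^\ast\beta)$ for every $T\in\mathcal{D}^k(\widetilde X)$ and every test form $\beta\in\mathcal{A}_c^{n-k}(X)$.

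With these descriptions in hand, I would simply compute, for $\alpha\in\mathcal{A}^k(X)$ and an arbitrary $\beta\in\mathcal{A}_c^{n-k}(X)$,
$$\big(\pi_\flat\widetilde\varrho\pi^\ast\alpha\big)(\beta)=\big(\widetilde\varrho\pi^\ast\alpha\big)(\pi^\ast\beta)=\int_{\widetilde X}\pi^\ast\alpha\wedge\pi^\ast\beta=\int_{\widetilde X}\pi^\ast(\alpha\wedge\beta).$$
Now $\alpha\wedge\beta$ is a compactly supported $n$-form on $X$, so the defining property of the degree $\mu=\deg\pi$ of the proper surjective map $\pi$ between oriented $n$-manifolds gives $\int_{\widetilde X}\pi^\ast(\alpha\wedge\beta)=\mu\int_X\alpha\wedge\beta=\mu\,\varrho(\alpha)(\beta)$. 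Since $\beta$ was arbitrary, this yields $\pi_\flat\widetilde\varrho\pi^\ast=\mu\,\varrho$ as maps $\mathcal{A}^k(X)\to\mathcal{D}^k(X)$, which is the assertion.

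The one step that requires genuine care --- and the reason the lemma is quoted from \cite{Wells74} rather than reproved in detail --- is the mapping-degree identity $\int_{\widetilde X}\pi^\ast\omega=\mu\int_X\omega$ for a compactly supported top-degree form $\omega$. This is obtained by restricting attention to the open dense set of regular values of $\pi$, whose complement has measure zero by Sard's theorem; over that set $\pi$ is a finite covering whose sheet number, counted with orientation signs, is the locally constant integer $\mu$, and one integrates fibrewise. Properness is what keeps this sheet count finite and is also what ensures $\pi^\ast$ preserves compact supports, so that all the integrals above are well defined. Apart from this classical input I expect no obstacle, as the rest is only a formal manipulation of the two duality pairings.
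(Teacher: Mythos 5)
The paper does not prove this lemma; it is quoted verbatim from Wells \cite[Lemma 2.1]{Wells74}, so there is no internal argument to compare against. Your computation --- realizing $\pi_\flat$ as the transpose of $\pi^\ast$ acting on compactly supported test forms (which is legitimate precisely because $\pi$ is proper) and reducing the identity to $\int_{\widetilde X}\pi^\ast(\alpha\wedge\beta)=\mu\int_X\alpha\wedge\beta$ via the classical degree formula --- is correct and is exactly the standard proof of the cited result.
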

In other words, the diagram \eqref{3.5} is commutative up to a fixed constant multiple, which would not affect the passage to cohomology later on.
According to Lemma \ref{3.5}, we obtain the commutative diagram
\begin{equation} \label{3.6}
 \xymatrix{
  H_{\pi^\ast\theta}^k(\widetilde{X}) \ar[r]^{\widetilde{\varrho}_\ast}
                & H_{\mathscr{D}, \pi^\ast\theta}^k(\widetilde{X}) \ar[d]^{\pi_{\flat}}  \\
  H_\theta^k(X) \ar[u]_{\pi^\ast}\ar[r]^{\varrho_\ast}
                & H_{\mathscr{D}, \theta}^k(X)   .          }
\end{equation}
Now we can prove the injection of $\pi^\ast$. If $\xi\in H_\theta^k(X) $ such that $ \pi^\ast\xi=0 $, then $ \pi_{\flat}\widetilde{\varrho}_\ast\pi^\ast\xi = \mu \varrho_\ast\xi = 0 $ and since $ \mu\neq0 $ and $ \varrho_\ast $ is injective, it follows that $ \xi=0 $. This completes the proof of Proposition \ref{prop 3.2}.

\subsection{Explicit isomorphisms between cohomologies}
The construction of the morphism $\phi$ is inspired by \cite{ryy2}. In view of Proposition \ref{proj-bundle}, we get
$$
H^{k}_{\pi_E^\ast i^\ast\theta}(E) \cong
\bigoplus^{r-1}_{j=0}\bm{h}^{j}\wedge
\pi_{E}^{\ast}H^{k-2j}_{i^\ast\theta}(Z),
$$
where
$
\bm{h}:=c_1(\mathcal{O}_{E}(-1))\in H_{dR}^2(E),
$
and
$\pi_{E}^{\ast}$ is the pullback of the projection
$\pi_{E}: E\rightarrow Z$.
It means that each class
$[\tilde{\alpha}]_{k}\in H^{k}_{\pi_E^\ast i^\ast\theta}(E)$
admits a unique expression
$$
[\tilde{\alpha}]_{k}=
\sum^{r-1}_{j=0}\bm{h}^{j}\wedge
\pi_{E}^{\ast}[\alpha]_{k-2j},
$$
where
$[\alpha]_{k-2j} \in H^{k-2j}_{i^\ast\theta}(Z)$.
It's natural to define the linear map
\begin{eqnarray*}
  \Pi_{j}:H^{k}_{\pi_E^\ast i^\ast\theta}(E)
  &\rightarrow& H^{k-2j}_{i^\ast\theta}(Z) \\
  {[\tilde{\alpha}]}_{k}&\mapsto& {[\alpha]}_{k-2j}.
\end{eqnarray*}
Then we can define the desired morphism $\phi$ by setting
\begin{equation}\label{phi}
\phi=\pi_{\ast}+\sum_{j=1}^{r-1}\Pi_{j}\circ \tilde{i}^{\ast}
\end{equation}
which maps
$\mathbb{V}^{k}:=H^{k}_{\pi^\ast\theta}(\tilde{X})$
to the space
$$
\mathbb{W}^{k}:=
 H^{k}_{\theta}(X)\oplus \Big(\bigoplus_{j=1}^{r-1} H^{k-2j}_{i^\ast\theta}(Z) \Big).
$$

 Over a compact complex manifold, the Morse--Novikov cohomology group has finite dimension. Since $\mathbb{V}^{k}$ is isomorphic to $\mathbb{W}^{k}$ as proved, it suffices  to verify the injectivity of $\phi$.
From \eqref{3.2}, one obtains a commutative diagram of short exact sequences
\begin{equation}\label{comm-coker}
\xymatrix@C=0.5cm{
0 \ar[r]^{} & H^{k}_{\theta}(X)
\ar[d]_{i^{\ast}}
\ar[r]^{\pi^{\ast}} &
H^{k}_{\pi^\ast\theta}(\tilde{X})
\ar[d]_{\tilde{i}^{\ast}}
\ar[r]^{} &
\mathrm{coker}\,(\pi^{\ast}) \ar[d]_{\tilde{j}^\ast}^{\cong} \ar[r]^{} & 0 \\
0 \ar[r] &
H^{k}_{i^\ast\theta}(Z)
\ar[r]^{\pi_{E}^{\ast}} &
H^{k}_{\pi_{E}^\ast i^\ast\theta}(E)
\ar[r]^{} &
\mathrm{coker}\,(\pi_{E}^\ast)\ar[r]^{} & 0.}
\end{equation}
Here $\tilde{j}^{\ast}$ is the induced isomorphism of the quotient spaces by $\tilde{i}^{\ast}$.
Combining \eqref{3.6} with \eqref{comm-coker} gives rise to the following diagram
$$\label{curr-coker}
\xymatrix@C=0.5cm{
& H^{k}_{\mathscr{D}, \theta}(X)
 &
H^{k}_{\mathscr{D}, \pi^\ast\theta}(\tilde{X})
\ar[l]_{\pi_{\flat}}
&
& \\
0 \ar[r]^{} & H^{k}_{\theta}(X)
\ar[u]^{\varrho_{\ast}}_{\cong}
\ar[d]_{i^{\ast}}
\ar[r]^{\pi^{\ast}} &
H^{k}_{\pi^\ast\theta}(\tilde{X})
\ar[d]_{\tilde{i}^{\ast}}
\ar[u]^{\tilde{\varrho}_{\ast}}_{\cong}
\ar[r]^{} &
\mathrm{coker}\,(\pi^{\ast}) \ar[d]_{\tilde{j}^\ast}^{\cong} \ar[r]^{} & 0 \\
0 \ar[r] &
H^{k}_{i^\ast\theta}(Z)
\ar[r]^{\pi_{E}^{\ast}} &
H^{k}_{\pi_{E}^\ast i^\ast\theta}(E)
\ar[r]^{} &
\mathrm{coker}\,(\pi_{E}^\ast)\ar[r]^{} & 0.}
$$
Recall that
$
\pi_{\ast}=
\varrho^{-1}_{\ast}\circ\pi_{\flat}
\circ\tilde{\varrho}_{\ast}
$
and
$
\phi=\pi_{\ast}+\sum_{j=1}^{r-1}\Pi_{j}\circ \tilde{i}^{\ast}
$
maps $H^{k}_{\pi^\ast\theta}(\tilde{X})$ to
$$
H^{k}_{\theta}(X)\oplus \Big(\bigoplus_{j=1}^{r-1} H^{k-2j}_{i^\ast\theta}(Z) \Big).
$$
We already know that $\pi_{\ast}$ is surjective and a direct check shows that
the kernel of $\pi_{\ast}$ is equal to the co-kernel of $\pi^{\ast}$, i.e.,
$$
\ker\,(\pi_{\ast})=\mathrm{coker}\,(\pi^{\ast})
\stackrel{\tilde{j}^{\ast}}\rightarrow
\biggl(\bigoplus^{r-1}_{j=1}\bm{h}^{j}\wedge
\pi_{E}^{\ast}H^{k-2j}_{i^\ast\theta}(Z)\biggr).
$$
It follows that the restriction of $\tilde{i}^{\ast}$ on
$\ker\,(\pi_{\ast})$ is injective.
Given an element
$\tilde{\alpha}\in H^{k}_{\pi^\ast\theta}(\tilde{X})$, suppose $\phi(\tilde{\alpha})=0$.
Then we get
$\tilde{\alpha}\in\ker\,(\pi_{\ast})$,
$\tilde{i}^{\ast}(\tilde{\alpha})=0$ and so $\tilde{\alpha}=0$.
This implies that $\phi$ is injective.

We just proved the isomorphism of $\phi$ when the manifold $X$ is compact.  And in non-compact case, it holds too. In fact, the Morse--Novikov cohomology can be calculated by the sheaf cohomology of $\mathbb{R}_{X,\theta}$,
 $$ H_\theta^\ast(X) \cong H^\ast(X,\mathbb{R}_{X,\theta}). $$
 The weight $\theta$-sheaf $\underline{\mathbb{R}}_{X,\theta} $ is a locally constant sheaf of $ \mathbb{R}$-modules of rank $1$. According to \cite[Lemma 4.3]{meng2} and \cite[Proposition 6.8]{meng3}, the morphism $\phi$ and $\psi$ in \eqref{psi} are inverse to each other. Since Meng has proved that the morphism $\psi$ is isomorphic in \cite[Main Theorem 1.3]{meng2}, $\phi$ is also an isomorphism.


\begin{thebibliography}{GP}
\bibitem{Be}
F. Belgun,
\emph{On the metric structure of non-K\"ahler complex surfaces,}
Math. Ann. 317, 1-40(2000).

\bibitem{Br}
G. Bredon, \emph{Sheaf theory,} Second edition, Graduate Texts in Math {170}, Springer-Verlag, New York (1997).

\bibitem{GL}
F. Gu\'{e}dira, A. Lichnerowicz,
\emph{G\'{e}om\'{e}trie des alg\`{e}bres de Lie locales de Kirillov,}
J. Math. Pures Appl. \textbf{63}(4), 407-484 (1984).

\bibitem{iv}
B. Iversen,
\emph{Cohomology of sheaves,}
Universitext. Springer-Verlag, Berlin, 1986.

\bibitem{lee}
J. Lee,
\emph{Introduction to smooth manifolds,} Second edition,
Graduate Texts in Mathematics, 218. Springer, New York, 2013.

\bibitem{L}
A. Lichnerowicz,
\emph{Les vari\'{e}t\'{e}s de Poisson et leurs alg\'{e}bres de Lie associ\'{e}es,}
J. Differ. Geom. \textbf{12}(2), 253-300 (1977)

\bibitem{meng2}
 L. Meng,
 \emph{Morse-Novikov cohomology for blow-ups of complex manifolds, }
\href{http://arxiv.org/abs/1806.06622}{arXiv:1806.06622v3}, 22 Oct 2018.

\bibitem{meng3}
L. Meng,
\emph{Mayer--Vietoris systems and their applications,}
\href{http://arxiv.org/abs/1811.10500}{arXiv:1811.10500v3}, 17 April 2019.

\bibitem{N}
P. S. Novikov,
\emph{The Hamiltonian formalism and a multivalued analogue of Morse theory (Russian),}
 Uspekhi Mat. Nauk. \textbf{37}, 3-43 (1982)

\bibitem{ovv}
L. Ornea, M. Verbitsky, V. Vuletescu,
\emph{Classification of non-K\"ahler surfaces and locally conformally K\"ahler geometry,}
\href{http://arxiv.org/abs/1810.05768}{arXiv:1810.05768v2}, 13 Oct 2018.


\bibitem{ryy}
S. Rao, S. Yang, X.-D. Yang,
\emph{Dolbeault cohomologies of blowing up complex manifolds,}
Journal de Math\'{e}matiques Pures et Appliqu\'{e}es, \href{https://doi.org/10.1016/j.matpur.2019.01.016}{https://doi.org/10.1016/j.matpur.2019.01.016}.

\bibitem{ryy2}
S. Rao, S. Yang, X.-D. Yang,
\emph{Dolbeault cohomologies of blowing up complex manifolds II: bundle-valued case,}
Journal de Math\'{e}matiques Pures et Appliqu\'{e}es, \href{https://doi.org/10.1016/j.matpur.2019.02.010}{https://doi.org/10.1016/j.matpur.2019.02.010}.

\bibitem{ryyy}
S. Rao, S. Yang, X.-D. Yang, X. Yu,
\emph{Hodge cohomology on blow-ups along subvarieties}, preprint, July 28, 2019.

\bibitem{V}
C. Voisin,
\emph{Hodge theory and complex algebraic geometry. I,}
Translated from the French original by Leila Schneps.
Cambridge Studies in Advanced Mathematics, 76. Cambridge University
Press, Cambridge, 2002.

\bibitem{Wells74}
R. O. Wells,
\emph{Comparison of de Rham and Dolbeault cohomology for proper surjective mappings,}
Pacific J. Math. 53 (1974) 281-300.

\bibitem{YZ15}
X.-D. Yang, G. Zhao,
\emph{A note on the Morse-Novikov cohomology of blow-ups of locally conformal K\"ahler manifolds}, Bull. Aust. Math. Soc. 91 (2015) 155-166.

\bibitem{z19}
Y. Zou, \emph{Morse--Novikov cohomology of blowing up complex manifolds and deformation of CR structure}, Master thesis, Wuhan University, June 2019.


\end{thebibliography}
\end{document}